\theoremstyle{plain}
\newtheorem{theorem}{Theorem}[section]
\newtheorem{corollary}[theorem]{Corollary}
\newtheorem{thm}{Theorem}
\newtheorem{lem}[thm]{Lemma}
\newtheorem{definition}{Definition}
\theoremstyle{remark}
\newtheorem*{remark}{Remark}
\renewcommand{\hat}{\widehat}
\renewcommand{\tilde}{\widetilde}
\DeclareMathOperator*{\argmin}{argmin}
 \newcommand{\esp}[1]{\mathbb{E}\left[#1\right]} 
\newcommand{\prob}[1]{\mathbb{P}\left(#1\right)} 
\newcommand{\R}{\mathbb{R}} 
\begin{document}

\begin{center}
	\Large \bf  GROS: A General Robust Aggregation Strategy.
\end{center}

 \normalsize
\begin{center}
	Alejandro Cholaquidis$^1$,    Emilien Joly$^2$ and Leonardo Moreno$^3$\\
	$^1$ Centro de Matemática, Facultad de Ciencias, Universidad de la República, Uruguay\\
	$^2$ Centro de Investigación en Matemáticas, CIMAT, México.\\
	$^3$ Departamento de M\'{e}todos Cuantitativos, Facultad de Ciencias Econ\'{o}micas y de Administraci\'{o}n, Universidad de la Rep\'{u}blica, Uruguay.
\end{center}

\begin{abstract}
A new, very general, robust procedure for combining estimators in metric spaces is introduced GROS. The method is reminiscent of the well-known median of means, as described in \cite{devroye2016sub}.  Initially, the sample is divided into $K$
groups. Subsequently, an estimator is computed for each group. Finally, these $K$ estimators are combined using a robust procedure. We prove that this estimator is sub-Gaussian and we get its break-down point, in the sense of Donoho. The robust procedure involves a minimization problem on a general metric space, but we show that the same (up to a constant) sub-Gaussianity is obtained if the minimization is taken over the sample,  making GROS feasible in practice.
The performance of GROS is evaluated through five simulation studies: the first one focuses on classification using $k$-means,  the second one on the multi-armed bandit problem, the third one on the regression problem. The fourth one is the set estimation problem under a noisy model. Lastly, we apply GROS  to get a robust persistent diagram.
\end{abstract}

\noindent%
{\it Keywords:} Bandits, Median of means, Robustness, Sub-Gaussian estimator, Topological data analysis

\section{Introduction}

The problem of combining estimators has been extensively studied in statistics. There are recent proposals that merge regression estimators (see, for instance,  \cite{biau2016cobra}), classifiers (see \cite{cholaquidis2016nonlinear}), and density estimators (see  
 \cite{cholaquidis2021combined}), among others. In these scenarios, the aim is to merge the estimators to generate one that, at least asymptotically, surpasses the best of the group. In other instances, the  aim  is to derive a robust estimator.  This is the case of the well-known \textit{median of means} (MOM) estimator, which aims to derive a robust estimator of the mean of a random variable.  In the MOM the data, $\aleph_n$ (an i.i.d. sample of a random variable $X$), is first randomly partitioned into $K$ groups. Subsequently, the mean of each group is computed. The MOM estimator is then the median of these  $K$ means. If the variance of the data is assumed to be finite,  this estimator is sub-Gaussian. For further details, we refer to \cite{devroye2016sub}, \cite{joly2017estimation}, and the references therein. For the case of random vectors, the so-called median of means tournament is introduced in \cite{lugosi2019sub}, where is proved to be sub-Gaussian.  In \cite{rodriguez2019breakdown} it is proved that the median of means tournament has break-down point $\lfloor (K-1)/2\rfloor /n$, where $\lfloor x\rfloor$ denotes the floor of $x$ and $n$ is the sample size.

 The use of these estimators has proven to be valuable in varied statistical scenarios, such as in machine learning, see \cite{lecue2020}.  In these contexts, it is advisable to consider estimators that, without removing outliers, do not reduce their precision. Robust statistics point in this direction, see \cite{maronna2019} and \cite{aaron2019}.

Following this idea of dividing into $K$ groups, calculating the estimator in each group, and then combining them, we will introduce a new way to combine the estimators in order to obtain, in a very general framework, a new one that, under the assumption that the estimators by group are independent, turns out to be sub-Gaussian (see Theorem \ref{thm:aggregation_theoretical} below). This new strategy,  in what follows: GROS,  has breakdown point $ \lceil K/2\rceil /n$, where $\lceil x\rceil$ denotes the ceiling of $x$, see Section \ref{sec:robagg}. The only assumption we make is that the original sample comes from a random  variable with finite variance and that the space where the group estimators take their values is a separable and complete metric space.

While the combined estimator requires solving a minimization problem in a metric space, we prove that if it is minimized on the sample of the group estimators, an appropriate candidate is obtained. We also determine how much is lost by this choice, see Section \ref{sec:compasp}.

Due to the immense generality of GROS , it can be applied to various areas of statistics where robustness plays a key role. Furthermore, the space in which the estimators reside doesn't need to be a metric space: a pseudometric suffices. This permits the consideration of estimators in the space of bounded subsets of $\mathbb{R}^d$  equipped with the Hausdorff distance or the measure distance, which is the case when the object to be estimated is, for instance, a set. This space will be used in our fourth simulation example, see subsection \ref{sec:setesti}.

We have chosen to present five problems to demonstrate its performance, comparing it with techniques explicitly crafted for these specific issues. Some of these techniques were already designed to yield robust estimators. Specifically, we treat:
\begin{itemize}
    \item The traditional clustering problem;
    \item The multi-armed bandit problem with heavy-tailed rewards;
    \item Regression in the presence of noisy data;
    \item The estimation of a convex set when dealing with a noisy sample;
    \item An application to topological data analysis.
\end{itemize}
    It is worth noting that the fourth problem cannot be successfully treated using conventional methods such as convex hulls or $r$-convex hulls, as we will see.

As expected, in all cases the performance of GROS  is noticeably better than the proposals that do not consider the presence of outliers, see Section \ref{sec:applications}. Moreover, the performance is good compared to methods that do consider the presence of outliers, even surpassing some in certain cases.

	The structure of the paper is outlined as follows: Section 2 introduces GROS  within a broad context and examines its robustness properties. Section 3 presents a modification of GROS  to simplify its computational aspects. In Section 4, the application of GROS  across five problems is discussed. The paper concludes with Section 5, where the findings and implications of the study are elaborated.

\section{Robust aggregation of weakly convergent estimators} \label{sec:robagg}

In this section, we define and study a new proposal of a robust estimator based on the aggregation of estimators. We assume given a sample $\aleph_n=\{X_1,\dots,X_n\}$ of i.i.d. random elements with common distribution $P$.  Let $\mu$ be a certain characteristic of  $P$. We assume that $\mu$ belongs to a  complete and separable metric space $\mathcal{M}$ endowed with a metric $d$. As we mentioned in the introduction, all the results in this paper remains true if $d$ is a pseudo metric.
In the context of robust estimation, one goal is to obtain sub-Gaussian type  inequalities for the deviation of an estimator $\hat{\mu}$ from $\mu$. A common way of defining a robust, distribution-free estimator is to make $K$ disjoint groups out of $\aleph_n$, hence, to create a collection of $K$ independent estimators $\mu_1,\dots,\mu_K \in \mathcal{M}$. We define the robust aggregation of $\mu_1,\dots,\mu_K$, GROS, by
\begin{equation}
    \label{eq:def}
    \mu^*= \argmin_{\nu\in \mathcal{M}}\min_{I:|I|> \frac{K}{2}} \max_{j\in I} d(\mu_j,\nu).
\end{equation}
The minimization is taken over all the possible subsets $I$  of $\{1,\dots,K\}$ that contain at least $\lfloor  K/2 \rfloor+1$ indices. However, it is easy to see that it is enough to minimize over all possible subsets $I$ whose cardinality, $|I|$, fulfills $|I|=\lfloor  K/2 \rfloor+1$.
Observe that, for any $\nu\in \mathcal{M}$,
\[
\min_{I:|I|> \frac{K}{2}} \max_{j\in I} d(\mu_j,\nu)=:d(\nu,\nu_{(\lfloor  K/2 \rfloor+1)-\text{NN}}),
\]
where $\nu_{(\lfloor  K/2 \rfloor+1)-\text{NN}}$ denotes the  $(\lfloor  K/2 \rfloor+1)$-nearest neighbor of $\nu$ in $\mu_1,\dots,\mu_K$. This last quantity is a measure of the depth of $\nu$ inside the set  $\mu_1,\dots,\mu_K$. Then, $\mu^*$ is the point with the least depth from all the candidates $\nu\in \mathcal{M}$. 
In full generality, the set of minimizers of \eqref{eq:def} may not be unique. In that case, we still denote by $\mu^*$ one of the minimizers arbitrarily chosen.  Since we assumed that $\mathcal{M}$ is a Polish space, it is easy to see that $\mu^*\neq \emptyset$.

\begin{remark}
A natural generalization of $\mu^*$ is to define, for $q\in [1/2,1]$,
\begin{equation*}
    \mu_q^*= \argmin_{\nu\in \mathcal{M}}\min_{I:|I|> Kq} \max_{j\in I} d(\mu_j,\nu).
\end{equation*}
All the results we present are for $q=1/2$ but they remain true for any $q\in [1/2,1]$.
\end{remark}

As we said in the Introduction, we aim to combine the estimators $\mu_1,\dots,\mu_k$ in a robust way. More precisely, let us recall the definition of finite-sample breakdown point introduced by Donoho (see \cite{donoho1982breakdown}).
\begin{definition} Let $\mathbf{x}=\{x_1,\dots,x_n\}$ be a dataset, $\theta$ an unknown parameter lying in a metric space $\Theta$, and $\hat{\theta}_n=\hat{\theta}_n(\mathbf{x})$ an estimator based on $\mathbf{x}$.
Let $\mathcal{X}_p$ be the set of all datasets $\mathbf{y}$ of size $n$ having $n-p$ elements in common with $\mathbf{x}$:
	$$\mathcal{X}_p=\{\mathbf{y}:|\mathbf{y}|=n  \text{ and } |\mathbf{x}\cap \mathbf{y}|=n-p\}.$$
	Then, the breakdown point of $\hat{\theta}_n$ at $\mathbf{x}$ is $\epsilon_n^*(\hat{\theta}_n,\mathbf{x})=p^*/n,$
	where 
\begin{multline*}
p^*=\max\{p\geq 0: \forall \mathbf{y}\in \mathcal{X}_p, \hat{\theta}_n(\mathbf{y}) \textrm{ is bounded and} \\  \text{bounded away from the boundary} \ \partial \Theta,   \textrm{ if } \partial \Theta\neq \emptyset  \}.
\end{multline*}
\end{definition}

From  \eqref{eq:def} it follows easily that the  finite-sample breakdown point of $\mu^*$ is $\lceil K/2\rceil/n$, which is the same order obtained in \cite{rodriguez2019breakdown} for the MOM aggregation strategy mentioned in the Introduction.

The following lemma states that if there exists  an $\eta$ for which at least $K/2$ of the $\mu_i$ are at a distance at most $t$ from $\eta$, then any minimum in \eqref{eq:def} is at a distance at most $2t$ from $\eta$. This, as we will see, implies the  robustness and sub-Gaussianity of the estimator \eqref{eq:def}.
Let us write $[K]=\{1,\dots,K\}$.

\begin{lem} \label{lem:binomial_argument} Assume that there exist  an $\eta \in \mathcal{M}$ and an $I\subset [K]$ with $|I|>K/2$ such that for all $j\in I$, $d(\mu_j,\eta)\le t$. Then, $d(\mu^*,\eta)\le 2t$.
\end{lem}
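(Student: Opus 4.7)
The plan is to exploit the definition of $\mu^*$ directly, combined with a pigeonhole argument on the two majorities, and finish with the triangle inequality.

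First, I would observe that the existence of $\eta$ and $I$ with $|I| > K/2$ and $d(\mu_j,\eta) \le t$ for $j \in I$ means that $\eta$ itself is a feasible candidate in the outer minimization of \eqref{eq:def}, and it realises an objective value of at most $t$. Since $\mu^*$ is a minimiser, its optimal value is also $\le t$: concretely, there exists some $I^* \subset [K]$ with $|I^*| > K/2$ such that $d(\mu_j, \mu^*) \le t$ for every $j \in I^*$.

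Next comes the key combinatorial step: since both $I$ and $I^*$ are subsets of $[K]$ with cardinality strictly greater than $K/2$, we have
\[
|I \cap I^*| \ge |I| + |I^*| - K > 0,
\]
so there exists an index $j_0 \in I \cap I^*$. For this $j_0$, the bound $d(\mu_{j_0},\eta) \le t$ holds because $j_0 \in I$, and the bound $d(\mu_{j_0},\mu^*) \le t$ holds because $j_0 \in I^*$. Applying the triangle inequality yields $d(\mu^*,\eta) \le d(\mu^*, \mu_{j_0}) + d(\mu_{j_0},\eta) \le 2t$, which is the desired conclusion.

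There is really no serious obstacle here: the argument is a two-line pigeonhole plus triangle inequality, and the only subtlety to check is that the strict inequality $|I|, |I^*| > K/2$ is what forces $I \cap I^*$ to be nonempty (whether $K$ is even or odd), which is precisely why the definition in \eqref{eq:def} uses the threshold $\lfloor K/2 \rfloor + 1$. No use is made of any special structure of $\mathcal{M}$ beyond the triangle inequality, so the proof also goes through verbatim in the pseudometric case mentioned in the text.
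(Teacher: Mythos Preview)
Your proof is correct and follows essentially the same route as the paper: use $\eta$ as a witness that the optimal value in \eqref{eq:def} is at most $t$, deduce the existence of a majority set $I^*$ (the paper calls it $I_0$) with $\max_{j\in I^*} d(\mu_j,\mu^*)\le t$, intersect the two majorities, and apply the triangle inequality. Your write-up is in fact slightly more explicit than the paper's in justifying why $\mu^*$ attains value at most $t$.
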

\begin{proof} By hypothesis, there exists a set $I$ of cardinality at least $K/2$ such that $\max_{j\in I} d(\mu_j,\eta) \le t$. Since $\mu^*$ is a minimizer of \eqref{eq:def}, there exists a set $I_0$ (a priori different from $I$) with $|I_0|>K/2$ such that $\max_{j\in I_0} d(\mu_j,\mu^*)\le t.$
Now, note that $|I|+|I_0|>K$ and so there exists $j\in I\cap I_0$ such that  $d(\mu^*,\eta)\le d(\mu^*,\mu_j)+d(\mu_j,\eta)\le 2t$, which concludes the proof.
\end{proof}

Lemma \ref{lem:binomial_argument} can be applied when $\eta=\mu$, in which case if a group of at least $K/2$ estimators is reasonably close to the objective $\mu$, then $\mu^*$ itself is reasonably close.
Such an estimator is robust to outliers since this effect will not be altered by the bad behavior of up to $K/2-1$ estimators. This lemma is a technical fact that will allow us to use the so called binomial argument. Indeed, assume that $\mu$ is such that for any $0< p < 1/2$, there exists $t=t(n,K)$ such that for all $k\in 1,\dots,K$, $\prob{d(\mu_k,\mu)> t} \le p$.
Since the estimators $\mu_1,\dots,\mu_K$ are independent,
 
\begin{align}\label{boundbinom}
    \prob{d(\mu^*,\mu)> 2 t} &\leq  \mathbb{P} (\exists I: |I|\ge \lfloor K/2 \rfloor  \ \text{ and } \forall i\in I,d(\mu_i,\mu)>t) \nonumber\\
    &=\mathbb{P}\Bigg(\sum_{k=1}^K \mathbb{I}_{\{d(\mu_k,\mu)>t\}} \geq \lfloor K/2\rfloor \Bigg) \nonumber\\
    &\le \prob{B_{K,p}\ge \lfloor K/2\rfloor } \nonumber \\
    &\le e^{\frac{-2(\lfloor K/2\rfloor-Kp)^2}{K}},
\end{align}
where $B_{K,p}$ denotes a random variable with binomial distribution, with parameters $K$ and $p$. Let us assume that the $\mu_i$ are identically distributed. Then, if we choose $p=1/4$ and $K=\lceil 8\log(\delta^{-1})\rceil$, using the fact that $\lfloor \lceil x\rceil/2\rfloor\leq \lceil x\rceil /2$, we get from \eqref{boundbinom} together with Markov's inequality  
$$\prob{d(\mu^*,\mu)> 4\sqrt{\mathbb{E} d^2(\mu_1,\mu)}}\leq \delta.$$ 
Lastly, we have proved the following theorem:
\begin{thm}
\label{thm:aggregation_theoretical}
Let $\aleph_n=\{X_1,\dots,X_n\}$ be an i.i.d. sample from a distribution $P$.  Let $\mu\in \mathcal{M}$ a certain characteristic of $P$ where $(\mathcal{M},d)$ is a metric space. Let $\delta>0$ and $K=\lceil 8\log(\delta^{-1})\rceil$. We split $\aleph_n$ into $K$ disjoint groups (assume that $n$ is large enough to guarantee that $n/K=\ell \in \mathbb{N}$), and create  $K$ estimators  $\mu_1,\dots,\mu_K$ of $\mu$. Let $\mu^*$ be the aggregation defined by \eqref{eq:def}. Then,  
   \begin{equation}\label{concentration}
 \prob{d(\mu^*,\mu)> 4\sqrt{\mathbb{E} d^2(\mu_1,\mu)}}\leq \delta.
   \end{equation} 
\end{thm}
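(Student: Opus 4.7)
The plan is to follow precisely the chain of estimates that was sketched in the paragraph just preceding the theorem, turning each step into a verified inequality. The key players are Lemma \ref{lem:binomial_argument}, Markov's inequality, and Hoeffding's inequality for binomial tails.

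First, I would fix the scale $t := 2\sqrt{\mathbb{E}\,d^2(\mu_1,\mu)}$ so that a single Markov estimate
\[
\mathbb{P}\bigl(d(\mu_k,\mu)>t\bigr)\;\le\;\frac{\mathbb{E}\,d^2(\mu_1,\mu)}{t^2}\;=\;\frac14\;=:p
\]
holds for every $k$, using the fact that the $\mu_k$ are identically distributed. This is the correct threshold because Lemma \ref{lem:binomial_argument} converts a $t$-ball that contains a majority of the $\mu_k$ into a $2t$-ball around $\mu^*$, and $2t=4\sqrt{\mathbb{E}\,d^2(\mu_1,\mu)}$ matches the constant in the target bound \eqref{concentration}.

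Next, I would invoke Lemma \ref{lem:binomial_argument} contrapositively with $\eta=\mu$: if $d(\mu^*,\mu)>2t$, then no subset $I\subset[K]$ of cardinality $>K/2$ can satisfy $d(\mu_j,\mu)\le t$ for all $j\in I$. Equivalently, on this event at least $\lfloor K/2\rfloor$ of the indicators $\mathbb{I}_{\{d(\mu_k,\mu)>t\}}$ equal one. By independence of $\mu_1,\dots,\mu_K$ this sum is stochastically dominated by $B_{K,p}$, and we obtain
\[
\mathbb{P}\bigl(d(\mu^*,\mu)>2t\bigr)\;\le\;\mathbb{P}\bigl(B_{K,p}\ge \lfloor K/2\rfloor\bigr).
\]
This is exactly the chain of inequalities written in \eqref{boundbinom}, so I can either cite that display directly or re-derive it in two lines.

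Finally, I would apply Hoeffding's inequality to bound the right-hand side by $\exp\!\bigl(-2(\lfloor K/2\rfloor-Kp)^2/K\bigr)$ and verify the arithmetic for the prescribed choice $p=1/4$ and $K=\lceil 8\log(\delta^{-1})\rceil$. Using $\lfloor K/2\rfloor\ge K/2-1/2$ one checks that $(\lfloor K/2\rfloor-K/4)^2/K$ is at least $K/16$ up to a harmless lower-order correction, so that the exponential is bounded by $\delta$ for the stated $K$. This last verification is the only spot that requires any care, but it is routine algebra rather than a genuine obstacle; the conceptual content of the theorem is entirely carried by Lemma \ref{lem:binomial_argument} together with the Markov/binomial bound.
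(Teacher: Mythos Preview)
Your proposal is correct and follows exactly the argument the paper gives in the paragraph preceding the theorem: apply Lemma~\ref{lem:binomial_argument} with $\eta=\mu$, use Markov's inequality with $t=2\sqrt{\mathbb{E}\,d^2(\mu_1,\mu)}$ to obtain $p=1/4$, then bound the resulting binomial tail via Hoeffding with $K=\lceil 8\log(\delta^{-1})\rceil$. The only delicate point you flag---the arithmetic for odd $K$---is handled just as loosely in the paper (via the remark that $\lfloor\lceil x\rceil/2\rfloor\le\lceil x\rceil/2$), so your treatment is at the same level of rigor.
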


\begin{remark} In Theorem \ref{thm:aggregation_theoretical}, the restriction that $n$ is large enough to guarantee $n/K=\ell \in \mathbb{N}$ is purely technical, to ensure that the $\mu_1,\dots,\mu_K$ are identically distributed and then to get the clean expression  \eqref{concentration}. If this is not the case, that is, the groups are unbalanced, the estimators $\mu_1,\dots,\mu_K$ are independent but not necessarily identically distributed, and the obtained bound is 
   \begin{equation}\label{concentration2}
 \prob{d(\mu^*,\mu)> 4\sqrt{ \max_{i=1,\dots,K}\mathbb{E} d^2(\mu_i,\mu)}}\leq \delta.
   \end{equation} 
\end{remark}

\subsection{Mis-specification of the set $\mathcal{M}$} \label{missesp}
 
Lemma \ref{lem:binomial_argument} uses the fact that $\eta$ is inside the set $\mathcal{M}$. Now, assume given a imperfect set $\tilde{\mathcal{M}}$ that is mis-specified in the sense that $d(\eta,\tilde{\mathcal{M}})=\inf_{\nu\in \tilde{\mathcal{M}}} d(\eta,\nu)=\epsilon>0$. The true parameter of interest does not belong to the set of features $\tilde{\mathcal{M}}$. The minimization is then given by
\begin{equation}
    \tilde{\mu}= \argmin_{\nu\in \tilde{\mathcal{M}}}\min_{I:|I|> \frac{K}{2}} \max_{j\in I} d(\mu_j,\nu).
\end{equation}

\begin{lem} \label{lem:missesp} Assume that there exist an $\eta \in \mathcal{M}$ and an $I\subset [K]$ with $|I|>K/2$ such that for all $j\in I$, $d(\mu_j,\eta)\le t$. Then, $d(\tilde{\mu},\eta)<2t+\epsilon$.
\end{lem}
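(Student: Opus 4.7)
The plan is to mimic the proof of Lemma \ref{lem:binomial_argument}, with the only extra ingredient being the compensation for the fact that $\eta$ no longer lies in the feasible set $\tilde{\mathcal{M}}$ over which the argmin is taken.

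First, I would exploit that $d(\eta,\tilde{\mathcal{M}})=\epsilon$ is an infimum: for every $\delta'>0$ pick $\tilde{\eta}\in\tilde{\mathcal{M}}$ with $d(\eta,\tilde{\eta})<\epsilon+\delta'$. Using the triangle inequality together with the hypothesis $d(\mu_j,\eta)\le t$ for $j\in I$, one gets $d(\mu_j,\tilde{\eta})\le t+\epsilon+\delta'$ for every $j\in I$. Since $|I|>K/2$, the candidate $\tilde{\eta}\in\tilde{\mathcal{M}}$ certifies that the objective evaluated at $\tilde{\eta}$ is at most $t+\epsilon+\delta'$, and the defining optimality of $\tilde{\mu}$ over $\tilde{\mathcal{M}}$ then yields a subset $I_0\subset[K]$ with $|I_0|>K/2$ such that $\max_{j\in I_0} d(\mu_j,\tilde{\mu})\le t+\epsilon+\delta'$.

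At this point, the combinatorial step of Lemma \ref{lem:binomial_argument} applies verbatim: $|I|+|I_0|>K$ forces $I\cap I_0\ne\emptyset$, and for any $j$ in that intersection the triangle inequality gives
\[
d(\tilde{\mu},\eta)\le d(\tilde{\mu},\mu_j)+d(\mu_j,\eta)\le (t+\epsilon+\delta')+t = 2t+\epsilon+\delta'.
\]
Since $\delta'>0$ was arbitrary, letting $\delta'\to 0$ closes the argument.

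The only non-routine point—and the main obstacle compared with Lemma \ref{lem:binomial_argument}—is that the infimum defining $\epsilon$ need not be attained in $\tilde{\mathcal{M}}$, which is precisely why the auxiliary slack $\delta'$ has to be carried through the argument and sent to zero at the end. If $\tilde{\mathcal{M}}$ is closed so that some $\tilde{\eta}$ achieves $d(\eta,\tilde{\eta})=\epsilon$, the same line of reasoning with $\delta'=0$ delivers the inequality immediately, which also explains why the bound is stated with strict inequality in the lemma.
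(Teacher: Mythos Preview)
Your proof is correct and follows essentially the same route as the paper's: introduce a slack $\delta'>0$, pick an approximate projection $\tilde{\eta}\in\tilde{\mathcal{M}}$ with $d(\eta,\tilde{\eta})<\epsilon+\delta'$, use the optimality of $\tilde{\mu}$ to obtain $I_0$ with $|I_0|>K/2$, intersect $I$ and $I_0$, apply the triangle inequality, and let $\delta'\to 0$. One minor quibble: your closing remark does not actually account for the strict inequality in the statement---both your argument and the paper's only yield $d(\tilde{\mu},\eta)\le 2t+\epsilon$, so the ``$<$'' in the lemma is a slight overstatement rather than a consequence of the infimum being unattained.
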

\begin{proof}
By the triangle inequality $\forall \delta>0$, there exists  an $\eta_\delta \in \tilde{\mathcal{M}}$ such that $$\max_{j\in I} d(\mu_j,\eta_\delta) \le t+\epsilon+\delta.$$ So there exists a set $I_0$ of cardinality $|I_0|>K/2$, such that $
\max_{j\in I_0} d(\mu_j,\tilde{\mu}) \le t+\epsilon+\delta.$

Since $|I|+|I_0|>K$, there exists $j_0\in I\cap I_0$. Then, $d(\tilde{\mu},\eta)\leq d(\tilde{\mu},\mu_{j_0})+d(\mu_{j_0},\eta)\leq 2t+\epsilon+\delta$.
Since this holds for all $\delta>0$, it follows that $d(\tilde{\mu},\eta)<2t+\epsilon$.
\end{proof}
The following corollary is a direct consequence of Lemma \ref{lem:missesp}.
\begin{corollary}
Let $\aleph_n=\{X_1,\dots,X_n\}$ be an i.i.d. sample from a distribution $P$.  Let $\mu\in \mathcal{M}$ be a certain characteristic of $P$, where $(\mathcal{M},d)$ is a metric space. We assume given a set $\tilde{\mathcal{M}}\subset \mathcal{M}$ (possibly random) such that $d(\eta,\tilde{\mathcal{M}})=\epsilon>0$. Let $\delta>0$ and $K=\lceil 8\log(\delta^{-1})\rceil$. We construct the $K$ disjoint groups and $K$ estimators  $\mu_1,\dots,\mu_K$ of $\mu$ as in Theorem \ref{thm:aggregation_theoretical}. Then,  
   \begin{equation}
 \prob{d(\tilde{\mu},\eta)> 4\sqrt{\mathbb{E} d^2(\mu_1,\mu)}+\epsilon}\leq \delta.
   \end{equation} 
\end{corollary}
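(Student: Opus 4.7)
The plan is to combine the deterministic statement of Lemma \ref{lem:missesp} (taking $\eta=\mu$) with exactly the same binomial plus Markov argument that produced Theorem \ref{thm:aggregation_theoretical}. Concretely, if I can show that with probability at least $1-\delta$ there is a subset $I\subset[K]$ with $|I|>K/2$ for which every $\mu_j$, $j\in I$, lies within distance $t:=2\sqrt{\mathbb{E} d^2(\mu_1,\mu)}$ of $\mu$, then Lemma \ref{lem:missesp} immediately yields $d(\tilde\mu,\mu)<2t+\epsilon=4\sqrt{\mathbb{E} d^2(\mu_1,\mu)}+\epsilon$.

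To produce this event, I would first apply Markov's inequality to $d^2(\mu_k,\mu)$: for each $k$,
\[
\prob{d(\mu_k,\mu)>t}=\prob{d^2(\mu_k,\mu)>t^2}\le \frac{\mathbb{E} d^2(\mu_1,\mu)}{t^2}=\frac{1}{4}=:p,
\]
using the assumption that the $\mu_k$ are identically distributed. Since the groups are disjoint the indicators $\mathbb{I}_{\{d(\mu_k,\mu)>t\}}$ are independent, so the number of ``bad'' estimators is stochastically dominated by a $B_{K,1/4}$ random variable, and the probability that at least $\lfloor K/2\rfloor$ of them are bad is bounded by $\prob{B_{K,1/4}\ge \lfloor K/2\rfloor}$, exactly as in \eqref{boundbinom}.

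Applying Hoeffding's inequality to this binomial tail, with $K=\lceil 8\log(\delta^{-1})\rceil$ and $p=1/4$, gives
\[
\prob{B_{K,1/4}\ge \lfloor K/2\rfloor}\le e^{-2(\lfloor K/2\rfloor - K/4)^2/K}\le \delta,
\]
after using $\lfloor\lceil x\rceil/2\rfloor\le \lceil x\rceil/2$ to simplify the exponent (this is the same computation already carried out in the derivation of \eqref{concentration}). On the complementary event, more than $K/2$ of the $\mu_j$'s satisfy $d(\mu_j,\mu)\le t$, so the hypothesis of Lemma \ref{lem:missesp} with $\eta=\mu$ is met and the desired bound on $d(\tilde\mu,\mu)$ follows.

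There is no real obstacle: the corollary is essentially the ``mis-specified'' analogue of Theorem \ref{thm:aggregation_theoretical}, and the only change is that the inner deterministic step now invokes Lemma \ref{lem:missesp} (which loses an additive $\epsilon$) instead of Lemma \ref{lem:binomial_argument}. One small point I would flag is that $\tilde{\mathcal{M}}$ is allowed to be random; however, the event produced by the binomial argument depends only on the $\mu_k$ and $\mu$, and Lemma \ref{lem:missesp} holds pointwise in $\tilde{\mathcal{M}}$, so the potential randomness of $\tilde{\mathcal{M}}$ does not interfere with the probability bound.
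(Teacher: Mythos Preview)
Your argument is correct and is exactly the route the paper has in mind: the corollary is stated as ``a direct consequence of Lemma \ref{lem:missesp},'' and your write-up simply spells out that consequence by rerunning the Markov-plus-binomial computation of \eqref{boundbinom}--\eqref{concentration} with Lemma \ref{lem:missesp} in place of Lemma \ref{lem:binomial_argument}. Your identification $\eta=\mu$ and your remark that the randomness of $\tilde{\mathcal{M}}$ is harmless (since Lemma \ref{lem:missesp} is a deterministic, pointwise statement) are both appropriate.
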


\section{Computational aspects}\label{sec:compasp}

Equation \eqref{eq:def} supposes that one is able to find  minimizers of a complex functional on the metric space $\mathcal{M}$, which is often an unfeasible problem. To simplify that task, one can restrict  the minimization to the set of estimators  $\mu_1,\dots,\mu_K$. That is, we find the index $j^*$ such that
\begin{equation}
\label{eq:def_jstar}
    j^*=\argmin_{j=1,\dots,K}\min_{I:|I|> K/2} \max_{i\in I} d(\mu_i,\mu_j).
\end{equation}
The next lemma and theorem state that  $\mu_{j^*}$  has the same sub-Gaussian type bound (up to a constant) as $\mu^*$.

\begin{lem}
\label{lem:binomial_practical}
    Assume that there exists an $I\subset [K]$ such that $|I|> K/2$, and for all $j \in I$, $d(\mu,\mu_j) \le t$. Then  $d(\mu_{j^*},\mu)\le 3t$.
\end{lem}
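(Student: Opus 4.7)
The plan is to mimic the two-set intersection argument from Lemma \ref{lem:binomial_argument}, paying the price of one extra $t$ because we are now forced to pick a representative from among the $\mu_j$'s rather than a free point in $\mathcal{M}$.

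First, I would use the hypothesis to produce a cheap upper bound on the value of the objective in \eqref{eq:def_jstar}. Fix any $j_1\in I$. By the triangle inequality, for every $i\in I$,
\[
d(\mu_i,\mu_{j_1})\le d(\mu_i,\mu)+d(\mu,\mu_{j_1})\le 2t.
\]
Taking $I$ itself as the inner subset in \eqref{eq:def_jstar} shows that the minimum over $j\in[K]$ of $\min_{I':|I'|>K/2}\max_{i\in I'}d(\mu_i,\mu_j)$ is at most $2t$. Hence at the minimizer $j^*$ there exists some $I_0\subset[K]$ with $|I_0|>K/2$ such that $\max_{i\in I_0} d(\mu_i,\mu_{j^*})\le 2t$.

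Next I would carry out the same pigeonhole as in Lemma \ref{lem:binomial_argument}. Since $|I|+|I_0|>K$, the intersection $I\cap I_0$ is non-empty; pick any $j_2$ in it. Applying the triangle inequality one last time,
\[
d(\mu_{j^*},\mu)\le d(\mu_{j^*},\mu_{j_2})+d(\mu_{j_2},\mu)\le 2t+t=3t,
\]
which is the claimed bound.

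There is no real obstacle: the only subtlety is that restricting the outer minimization from $\mathcal{M}$ to $\{\mu_1,\dots,\mu_K\}$ forces the initial per-index bound to be $2t$ rather than $t$, so the final intersection step yields $3t$ instead of $2t$. The same binomial argument as in \eqref{boundbinom} will then transfer this to a sub-Gaussian deviation inequality for $\mu_{j^*}$ with the constant $4$ replaced by $6$.
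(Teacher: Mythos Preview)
Your argument is correct and essentially identical to the paper's: both bound the objective at any $j_1\in I$ by $2t$ via the triangle inequality, deduce the existence of $I_0$ with $|I_0|>K/2$ and $\max_{i\in I_0}d(\mu_i,\mu_{j^*})\le 2t$, and finish with the pigeonhole intersection $I\cap I_0\neq\emptyset$ plus one more triangle inequality. The only cosmetic difference is that the paper states $d(\mu_i,\mu_j)\le 2t$ for all $i,j\in I$ rather than singling out a specific $j_1$.
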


\begin{proof}
Let $I \subset [K]$ be such that $|I|> K/2$, and for all $j  \in I$, $d(\mu,\mu_j) \le t$, we have $d(\mu_i,\mu_j)\leq d(\mu_i,\mu)+d(\mu,\mu_j)\leq 2t$ for all $i,j\in I$. Then, there exists an $I_0$ with cardinality at least $K/2$ such that $d(\mu_{j^*},\mu_i)\leq 2t$ for all $i\in I_0$. Since $|I|+|I_0|>K$, there exists $j_0\in I\cap I_0$. Lastly, $d(\mu_{j^*},\mu)\leq d(\mu_{j^*},\mu_{j_0})+d(\mu_{j_0},\mu)\leq 3t$.
\end{proof}

By means of Lemma \ref{lem:binomial_practical}, it is possible to give a practical version of Theorem \ref{thm:aggregation_theoretical}.
\begin{thm} Assume the  hypotheses of Theorem \ref{thm:aggregation_theoretical}. Let $\mu_{j^*}\in\{\mu_1,\dots,\mu_K\}$ defined by the optimization \eqref{eq:def_jstar}. Then,
    \begin{equation}
    \label{eq:practical_concentration}
        \prob{d(\mu_{j^*},\mu)> 6\sqrt{\mathbb{E} d^2(\mu_1,\mu)}}\leq \delta.
    \end{equation}
\end{thm}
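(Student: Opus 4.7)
The plan is to mirror almost exactly the argument that established Theorem \ref{thm:aggregation_theoretical}, but with Lemma \ref{lem:binomial_practical} replacing Lemma \ref{lem:binomial_argument}. The factor $3$ in Lemma \ref{lem:binomial_practical} (as opposed to the factor $2$ in Lemma \ref{lem:binomial_argument}) is precisely what forces the constant in the sub-Gaussian bound to degrade from $4$ to $6$.

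First I would fix $t>0$ and set $p=\prob{d(\mu_1,\mu)>t}$. Since the groups are balanced, the $\mu_i$ are i.i.d., so $\prob{d(\mu_i,\mu)>t}=p$ for every $i\in [K]$. By Lemma \ref{lem:binomial_practical}, the event $\{d(\mu_{j^*},\mu)>3t\}$ forces the number of indices $i\in [K]$ with $d(\mu_i,\mu)\le t$ to be at most $K/2$; equivalently, at least $\lfloor K/2\rfloor$ of the indicators $\I_{\{d(\mu_i,\mu)>t\}}$ must equal $1$. Independence then yields the binomial bound
\begin{equation*}
\prob{d(\mu_{j^*},\mu)>3t}\le \prob{B_{K,p}\ge \lfloor K/2\rfloor},
\end{equation*}
in complete analogy with \eqref{boundbinom}.

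Next I would apply Hoeffding's inequality to obtain $\prob{B_{K,p}\ge \lfloor K/2\rfloor}\le \exp\bigl(-2(\lfloor K/2\rfloor-Kp)^2/K\bigr)$. Now I would choose the same calibration used for Theorem \ref{thm:aggregation_theoretical}: take $p=1/4$ and $K=\lceil 8\log(\delta^{-1})\rceil$. Using $\lfloor \lceil x\rceil/2\rfloor\le \lceil x\rceil/2$ (or rather the lower bound in the opposite direction actually used in \eqref{boundbinom}), the exponent is at most $-\log(\delta^{-1})$, so the probability is at most $\delta$. It remains to convert the tail condition $p=1/4$ into an explicit value of $t$: by Markov's inequality applied to $d^2(\mu_1,\mu)$,
\begin{equation*}
\prob{d(\mu_1,\mu)>2\sqrt{\mathbb{E} d^2(\mu_1,\mu)}}\le \frac{\mathbb{E} d^2(\mu_1,\mu)}{4\,\mathbb{E} d^2(\mu_1,\mu)}=\frac{1}{4},
\end{equation*}
so $t=2\sqrt{\mathbb{E} d^2(\mu_1,\mu)}$ works, and $3t=6\sqrt{\mathbb{E} d^2(\mu_1,\mu)}$, which is exactly \eqref{eq:practical_concentration}.

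There is no genuine obstacle here: the structure of the proof of Theorem \ref{thm:aggregation_theoretical} transfers verbatim, and the only place where care is required is in invoking Lemma \ref{lem:binomial_practical} instead of Lemma \ref{lem:binomial_argument} to go from ``many $\mu_i$ close to $\mu$'' to ``$\mu_{j^*}$ close to $\mu$''. In particular, the fact that $\mu_{j^*}$ is selected from the finite collection $\{\mu_1,\dots,\mu_K\}$ rather than from all of $\mathcal{M}$ plays no role in the binomial/Hoeffding estimate; it only enters through the factor $3$ in Lemma \ref{lem:binomial_practical}, and hence the only visible effect on the conclusion is the constant $6$ in place of $4$.
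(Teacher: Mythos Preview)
Your proposal is correct and is exactly the argument the paper intends: it does not spell out a separate proof but simply remarks that, ``By means of Lemma \ref{lem:binomial_practical}, it is possible to give a practical version of Theorem \ref{thm:aggregation_theoretical},'' i.e., one reruns the binomial/Hoeffding argument \eqref{boundbinom} with Lemma \ref{lem:binomial_practical} in place of Lemma \ref{lem:binomial_argument}, picking up the factor $3$ (hence $6$) instead of $2$ (hence $4$).
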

Note that Equation \eqref{eq:practical_concentration} is the same as Equation \eqref{concentration} except that it has the constant 6 in the right-hand side. This shows that the practical version of the estimator, $\mu_{j^*}$, has essentially the same rate of convergence as $\mu^*$, but it can be deteriorated by a constant factor.

\section{Some applications of GROS}\label{sec:applications}

\subsection{Classification by $k$-means}

One of the most popular procedures for determining clusters in a dataset is the $k$-means method. Although the precursors of this algorithm were MacQueen in 1967, see \cite{mcqueen67}, and Hartigan in 1978, see \cite{hartigan78}. Pollard in 1981 \cite{pollard81} proved the strong consistency of the method and in 1982, in \cite{pollard82}, determined its asymptotic distribution.

Given $k$, the $k$-means clustering procedure partitions a set $\{x_1, \ldots, x_n\}\subset \mathbb{R}^d$ into $k$ groups as follows:   first $k$ cluster centres $a_j$ are chosen, in such a way as to minimise

  $$W_n = \frac{1}{n} \sum_{l = 1}^n \min_{1 \leq j \leq k} \Vert x_l - a_j \Vert^2.$$
Then it assigns each $x_l$ to its nearest cluster centre. In this way, each centre  acquires a subset $C_l$  as its associated cluster. The mean of the points in $C_l$ must equal $a_l$, otherwise $W_n$ could be decreased by replacing $a_l$ with the cluster mean in the first instance, and then reassigning some of the $x$'s to their new centres. This criterion is then equivalent to that of minimising the sum of squares between clusters.

The standard $k$-means algorithm starts from a set   $a_1^{(1)}, \ldots , a_k^{(1)},$ and alternates, up to a stopping criterion, two steps: \textit{Assignment step:} Assigns each observation $x_j$ to the cluster whose centre is the closest one. \textit{Update step:} Recalculate means (centroids) for the observations assigned to each cluster, by averaging the observations in each cluster.

These $k$-centres are not robust to the presence of outliers, nor to the distribution's  possession of heavy tails.
There are several proposals in the literature that seek to make the $k$-means algorithm more robust. An example is the $k$-medoids algorithm (called PAM), see \cite{kaufman1990,kaufman2009}. In this algorithm, in the second step, one chooses that point in the cluster which minimises the sum of the distances to the remaining observations. These points are called the medoids.

Another proposal for a robust version of $k$-means, TClust,  is developed in \cite{cuesta1997}. It is based on an $\alpha>0$ trimming of the data.  This trimming is self-determined by the data  and aims to mitigate the impact of extreme data.

We propose a simple modification to the second step of the $k$-means algorithm, which will be referred to as \textit{RobustkM}. The idea is very simple: instead of calculating the centroids by taking the arithmetic mean in each group, the centroids are determined using \eqref{eq:def_jstar}.

\subsubsection{Simulations}

To evaluate the performance of this proposal, we run a small simulation study. In all cases, the data consist of an i.i.d. sample $X_1, \ldots, X_n$, whose common distribution, $F_X$, is given by the mixture of three bi-variate Student distributions. More precisely,

\begin{equation}\label{student}F_X(x)=  0.45 T \left( x,\mu_1, \nu, \Sigma_1 \right) + 0.45 T \left( x,\mu_2, \nu, \Sigma_2 \right) +0.1 T \left( x,\mu_3, \nu, \Sigma_3 \right), 
\end{equation}

\noindent where $T \left( x, \mu, \nu, \Sigma \right)$ denotes the bi-variate cumulative Student distribution function with mean $\mu \in \mathbb{R}^2$, variance and covariance matrix $\Sigma$, and $\nu=2$ degrees of freedom.

In the examples, we chose $\mu_1=(6,0)$,  $\mu_2=(-6,0)$, $\mu_3=(0,6)$, $\Sigma_1=\Sigma_2= \bigl( \begin{smallmatrix}3 & 0\\ 0 & 3\end{smallmatrix}\bigl)$ and $\Sigma_3= \bigl( \begin{smallmatrix} 4 & 1\\ 1 & 9\end{smallmatrix}\bigl)$.

Figure \ref{fig_mediasp} shows a simulation with $n=1000$ points. It can be seen that the dispersion of the third group makes the clustering problem more difficult.

\begin{figure}[htb]
\centering
 \subfloat{\includegraphics[width=110mm]{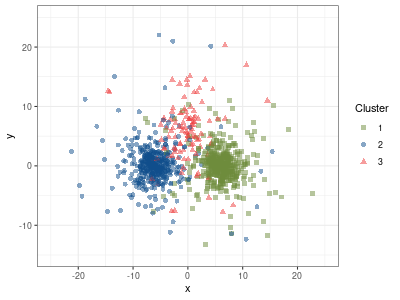}}
\caption{Simulation of 1000 observations of the multivariate Student mixture \eqref{student}. Observations are colored according to the component of the mixture which the data comes from.}
\label{fig_mediasp}
\end{figure}

Let $\pi$ be a permutation of the set of labels $\{1,2,3\}$. Denote by  $C(x)\in \{1,2,3\}$ the true (unknown) label, and $\hat{C}(x)\in \{1,2,3\}$ the label assigned by the algorithm to observation $x$. Then the classification error is given by

\begin{equation}\label{error} 
\min_{\pi} \frac{1}{n}\sum_{i=1}^{n}\mathbf{1}_{\Big\{C (x_i)\neq  \pi(\hat{C} (x_i))\Big\}}.
\end{equation}

Figure \ref{fig_mediasc} shows the performance of RobustkM (with $K=10$), $k$-means, PAM, and TClust (with $\alpha=0.01$), over $1000$ replications.  In TClust, the  trimmed data (at the end of the algorithm) are assigned to the nearest centres.  This toy example shows that the proposed algorithm is a competitive alternative to other methods that ``robustify''  $k$-means.

\begin{figure}[htb]
\centering
 \subfloat{\includegraphics[width=120mm]{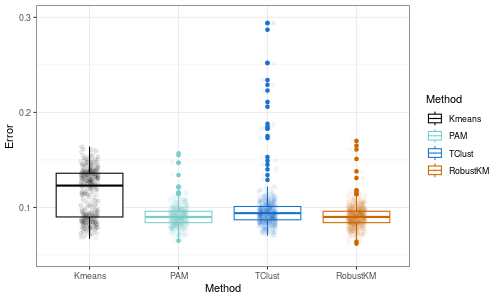}}
\caption{Box plot of classification errors, according to \eqref{error},  of $K$-means, TClust, PAM and RobustKM over  $1000$ replicates.}
\label{fig_mediasc}
\end{figure}

\subsection{Bandits}
The bandit problem was first proposed by Thompson in 1933, and has been recently been gaining increasing  attention. There have been several adaptations to various frameworks, as evidenced by comprehensive surveys, such as \cite{boursier2022survey} and \cite{burtini2015survey}, which cover a broad spectrum of applications.

According to \cite{lattimore2020bandit}, a bandit problem can be described as a sequential game played between a learner and an environment, spanning over a  number of rounds, $T$, which is a positive integer. In each round, denoted by $t$, the learner  first chooses an action (denoted by $\mathcal{A}_t$), from a defined set of actions (denoted by $\mathcal{A}$). Following this, the environment unveils a reward $X_{t}$, which is a real number. Given the action $\mathcal{A}_t$, the reward $X_t$ is assumed to be independent of the past.

In the stochastic $L$-armed bandit problem, there are $L$ possible actions. The rewards are derived from a set of distributions denoted by $P_1$ through $P_L$ (not depending on $t$).
The aim of the learner is to identify an arm, denoted by  $i^*$, whose distribution yields the highest mean, denoted by $\mathbb{E}(P_{i^*})$. This is equivalent to minimizing the regret
$$R_T=T\mathbb{E}(P_{i^*})-\sum_{t=1}^T \mathbb{E}(P_{\mathcal{A}_t}).$$ 

A classic example of such an algorithm is the well-known ``upper confidence bands'' (UCB). Let us recall that, in the UCB, the first choice $\mathcal{A}_1$ is made at random, and for $t\in \{1,\dots,T-1\}$,
\begin{equation}\label{UCB0}
\mathcal{A}_{t+1}=\underset{j\in \{1,\dots,L\}}{\text{argmax}} \ \ \frac{X_{1}\mathbb{I}_{\{\mathcal{A}_1=j\}}+\dots+ X_{t}\mathbb{I}_{\{\mathcal{A}_t=j\}}}{N_{t,j}}+\sqrt{\frac{\log(t)}{N_{t,j}}}
\end{equation}
where $X_{i}\mathbb{I}_{\{\mathcal{A}_1=j\}}$ is the reward of arm $j$ at time $i$ if this arm is chosen  at that time and $N_{t,j}=\sum_{s=1}^t \mathbb{I}_{\{\mathcal{A}_s=j\}}$ is the number of times that the arm $j$ was chosen.

\subsubsection{Heavy-tailed bandits}
To get a logarithmic regret for the UCB (i.e., $R_T/\log(T)\to C$), it is usually assumed that the distributions $P_i$ are sub-Gaussian. This can be weakened to require the distributions to have finite moment generating function, see \cite{agrawal1995sample}. To overcome this limitation, instead of just estimating the mean at each step $t$ of the UCB, in \cite{bubeck2013bandits} it is proposed to use a robust estimator, $\hat{\mu}_{i,t}$, of $\mathbb{E}(P_i)$, that fulfills the following assumption:\vspace{.5cm}

\noindent \textbf{Assumption 1} Let $\epsilon \in(0,1]$ be a positive parameter, and let $c,v$ be a positive constant. Let $X_1,\dots,X_n$ be i.i.d. random variables with finite mean $\mu$. Suppose that for all $\delta\in (0,1)$ there exists an estimator $\hat{\mu}=\hat{\mu}(n,\delta)$ such that, with probability at least $1-\delta$,
\begin{equation}\label{cond1}
\hat{\mu}\leq  \mu+v^{1/(1+\epsilon)}\left(\frac{c\log(1/\delta)}{n}\right)^{\epsilon/(1+\epsilon)}
\end{equation}
and also, with probability at least $1-\delta$,
\begin{equation}\label{cond3}
\mu\leq  \hat{\mu}+v^{1/(1+\epsilon)}\left(\frac{c\log(1/\delta)}{n}\right)^{\epsilon/(1+\epsilon)}.
\end{equation}
In that case we say that $\hat{\mu}$ fulfills Assumption 1.

\begin{remark}\label{bandits} From  \eqref{concentration}, our robust proposal $\mu^*$, applied on each arm, and based on $K=\lceil 8\log(1/\delta)\rceil$ groups, fulfills Assumption 1, for $\epsilon=1$, $v=\mathbb{V}(P_i)$, ($\mathbb{V}(P_i)$ being the variance of the distribution $P_i$), and $c=16$. In that case, for an arm $i=1,\dots,L$, $n=N_{t,i}/K$.
\end{remark}

\cite{bubeck2013bandits} proposes the following robust variant of the UCB: given $\epsilon \in (0,1]$, for arm $i$, define $\hat{\mu}_{i,s,t}$ as the estimator $\hat{\mu}(s,t^{-2})$ based on the first $s$ observed values $X_{i,1},\dots,X_{i,s}$ of the reward of arm $i$. Define the index
$$B_{i,s,t}=\hat{\mu}_{i,s,t}+v^{1/(1+\epsilon)}\left(\frac{c\log(t^2)}{s}\right)^{\epsilon/(1+\epsilon)}$$
for $s,t\geq 1$ and $B_{i,0,t}=+\infty$. Then, at time $t$ draw an arm maximizing $B_{i,N_{i,t-1},t}$.
\vspace{.5cm}

We propose the following algorithm. First we choose the arms at random from $t=1,\dots,t_0$, where  $t_0$ guarantees that for all $i=1,\dots, L$, $N_{t_0,i}/\lceil 8\log(t_0^2)\rceil \geq 1$. We compute, for each arm, the estimator \eqref{eq:def}, denoted by $\mu^*_{t_0,i}$, where we split the $N_{t_0,i}$ observations into $K=\lceil 8\log(t_0^2)\rceil$ groups, and compute the mean of each group. Define the index
$$\mathcal{B}_{i,N_{t_0,i},t_0}=\mu^*_{t_0,i} +4\sqrt{\hat{\mathbb{V}}(P_i)} \left(\frac{\log(t_0^2)}{N_{t_0,i}}\right)^{1/2},$$
where $\hat{\mathbb{V}}(P_i)$ is any consistent estimation of $\mathbb{V}(P_i)$, or an upper bound. At time $t_0+1$, we choose the arm that maximize $\mathcal{B}_{i,N_{t_0,i},t_0}$, at time $t_0+2$ we choose the arm that maximize $\mathcal{B}_{i,N_{t_0+1,i},t_0+1}$, and so on.

Proposition 1 in \cite{bubeck2013bandits} proves that   this algorithm attains logaritmic regret. More precisely,
$$R_T\leq \sum_{i:\Delta_i>0} \left( 32\left(\frac{\mathbb{V}(P_i)}{\Delta_i}\right) \log(T)+5\Delta_i\right),$$
where $\Delta_i=\mathbb{E}(P_{i^*})-\mathbb{E}(P_i)$.

\subsubsection{Simulations}

 As a toy example, let us consider the classical two-armed bandit problem  and rewards given by $X_t \vert \mathcal{A}_t =j \sim \mu_j + S(3),$ for $j=1,2$, where $S(3)$ is a random variable following  Student's distribution with $3$ degrees of freedom, $\mu_1=7$, and $\mu_2=8$. In our algorithm, indicated by RUCB in Figure \ref{fig_bandits},  the first $t_0=40$ arms are chosen at random.

The results are shown in Figure \ref{fig_bandits}, where the red dotted horizontal line $(y=8)$ is the maximum expected gain. The dashed lines corresponds to the mean rewards of the UCB (orange) and the RUCB (blue) respectively.  As can be seen, it takes the RUCB algorithm about 120 steps to outperform the UCB algorithm, and the difference grows larger as the number of steps increases.

\begin{figure}[htb]
\centering
 \subfloat{\includegraphics[width=140mm]{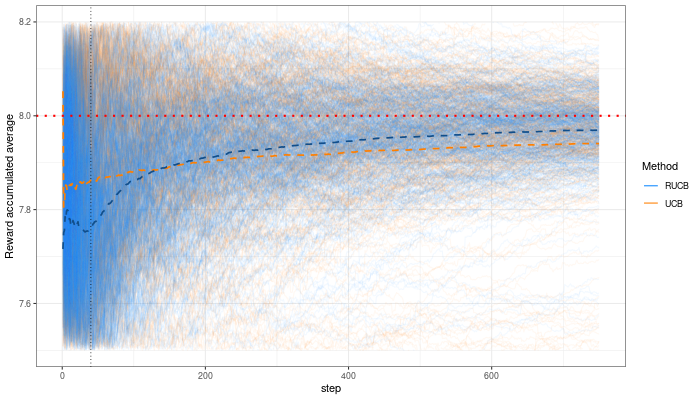}}
\caption{Cumulative gains over 500 replications, for $t=1, \ldots,750$. The red dotted horizontal line ($y=8$) is the maximum expected gain. The black dotted vertical line ($x=40$) indicates the number of random warm-up runs in the RUCB  algorithm. The dashed lines depict the mean reward of the UCB (orange) and RUCB (blue) algorithms.}
\label{fig_bandits}
\end{figure}

\subsection{Robust regression}
A current topic of interest in statistics is that of regression models in the presence of noise. It is known that a small fraction of outliers can cause severe biases in classical regression estimators.   These classical models generally assume additive residuals in the model with finite second moment (e.g., Gaussian).   An extensive review of robust outlier estimation methods for nonparametric regression models is provided in \cite{salibian2023}.

We tackle the problem of estimating   the function $m : \mathcal{X}\to \R$, from an i.i.d. sample  of a random element $(X,Y)\in \mathcal{X}\times \mathbb{R}$, that satisfies the model
 
\[
Y=m(X)+\epsilon
\]
where $\epsilon$ is a noise term such that $\esp{\epsilon|X}=0$. To keep the following discussion fairly simple, we will only cover the case $\mathcal{X}=\R$. In the context where very little information is known about $m$, a general estimator is often given by the kernel estimator
 \begin{equation}
 \label{eq:def_kernel_estim}
      \hat{m}(x)=\frac{\sum_{i=1}^n K_h(X_i-x)Y_i}{\sum_{i=1}^n K_h(X_i-x)},
 \end{equation}

 \noindent where $K_h (x) = h^{-1}\phi(h^{-1}x)$ and $h>0$ is some bandwidth parameter. The function $\phi$ is non-negative and such that $\int_{\R}\phi(x)dx=1$. In general, the continuity of the function $m$ is enough to get the  
consistency of the kernel estimator $\hat{m}$ and a light tailed behavior for $\epsilon$ gives sub-exponential deviation bounds for the estimator around its mean value $m$. Nevertheless, in various concrete settings, one can face heavy tailed distributions for $\epsilon$ in such a way that the estimator proposed in  \eqref{eq:def_kernel_estim} becomes highly unstable. Indeed, the presence of (virtually) one outlier is enough to drive the estimator towards values very distant from $m$.
One natural way to measure the quality of the estimator $\hat{m}$ is to consider the $L_2$ distance 
\[
d_2(\hat{m},m)=\esp{|\hat{m}(X)-m(X)|^2}^{1/2}.
\]
In this context, it is possible to introduce the robust version of the estimator by considering GROS  with the distance $d_2$.
In practice, the distance $d_2$ is actually intractable. To overcome this difficulty, it is common to use a discretization on a mesh for the space $\mathcal{X}$ and approximate the integral by a Riemann sum.
This estimator verifies a bound as in Theorem \ref{thm:aggregation_theoretical} for the associated distance.
The estimator $m^*$ is, in the sense of Definition \eqref{eq:def}, the best candidate in the class of the estimator $m_1,\dots,m_K$ constructed on the disjoint groups $G_1,\dots,G_K$ of data points. These estimators $m_1,\dots,m_K$ are defined by 
\[
m_j(x)=\frac{\sum_{i\in G_j} K_h(X_i-x)Y_i}{\sum_{i\in G_j} K_h(X_i-x)},
\]
so that these estimators are independent.
Following subsection \ref{missesp}, the following estimator for $\hat{m}$ can be proposed:
\[
\hat{m}=\argmin_{m\in \tilde{\mathcal{M}}} \min_{I:|I|> \frac{K}{2}} \max_{j\in I} d_2(m_j,m),
\]
where the $(m_j)_{j=1}^K$ are the kernel estimates of $m$ on each of the $K$ groups, and the set $\tilde{\mathcal{M}}$ denotes the set of functions that are piece-wise equal to one of the $m_i$.
The difference from the naive definition is that the set of minimization does not end with an estimator of the form $m_{i^*}$, which allows avoiding choosing a function that may be a good fit in some regions of the space but sensitive to outliers in other parts.

In order to fully use the result of Equation \eqref{concentration2}, we cite a result that gives an upper bound for the mean squared error for any of the estimators $m_j$ previously defined.
In chapter 5 of \cite[Theorem 5.2]{gyorfi2002distribution}, we get that if $m$ is $\lambda$-Lipschitz, and that $\text{Var}(Y|X=x)\le \sigma^2$ for all $x\in \mathcal{X}$, then
\[
\esp{\|m_1-m\|^2}\le c\left(\frac{\sigma^2+\sup_{x\in \mathcal{X}}m(x)^2}{(n/K)h^d}\right)+\lambda^2h^2.
\]
This induces a choice of $h$ of the form
\[
h=c'\left(\frac{K}{n}\right)^{\frac{1}{d+2}}.
\]
The upper bound then takes the form of
\[
\esp{\|m_1-m\|^2}\le c''\left(\sigma^2+\sup_{x\in \mathcal{X}}m(x)^2\right)^{2/d+2}\times \left(\frac{n}{K}\right)^{-2/d+2},
\]
which can be directly plugged into the main bound of the theorem.

\subsubsection{Simulations}

We compare, by means of simulations, the performance of GROS  with  some classical and robust regression alternatives proposed in the literature.

In this example we consider a sample $X_1, \ldots, X_{1000}$ with uniform distribution on $[0,5]$. Let  $m(x)= 4\sin(3x)$ and suppose that the centered noise follow the  skew-normal Student distribution,  see \cite{fernandez1998, azzalini2013}, whose density is defined as follows: denote by $t(x,\kappa,\nu)$ the density of the non-standardized Student's distribution with  location $\kappa$ and and $\nu$ degrees of freedom, and with cumulative distribution function $T(x,\kappa,\nu)$. Then,  the density of the skew-normal Student is $$ f(x;\kappa, \nu,\sigma, \xi )= \frac{1}{\sigma}t(x/\sigma,\kappa,\nu)T \left(\frac{\xi x}{\sigma},\kappa, \nu \right),$$

\noindent where $\sigma  > 0$ denotes the  scale parameter. The slant (or skewness parameter) is $\xi$.

We will write NW for the non-parametric Nadaraya--Watson kernel regression estimator \eqref{eq:def_kernel_estim}, see \cite{nadaraya1964,watson1964}.  As robust estimators, we consider the proposals developed in  \cite{oh2007}  (ONL estimator)  and \cite{boente2017} (SBMB estimator). Both estimators are implemented in the \textbf{R} language: the first  in the \textbf{fields} library and the other in the \textbf{RBF} library. Our estimator in this context will be called RANW (Robust Aggregation for Naradaya--Watson). For the latter, $K=12$ was considered. Figure \ref{fig_cajas1} shows the estimated regressions in each scenario and in red the true  function $f$. It is clear that the NW estimator performs poorly in all cases.

The performance of each estimator is measured by the average distance $d_2(\hat{m},m)$ over $1000$ replicates, considering $\kappa=0$, $\nu=3$ (note that the noises have heavy tails) and varying the parameters $\sigma$ and $\xi$. For the bandwidth parameter $h$, we choose $0.2$ in all cases.

Figure \ref{fig_cajas1} shows box plots of the errors for four choices of the parameters. As can be seen,  when the noise is asymmetric (i.e., $\xi\neq 1$), the best-performing estimators are RANW and ONL,  depending on the value of the scale parameter.  Note that when the distribution of the noise is symmetric (i.e., $\xi=1$),   SBMB and  RANW perform the best.

\begin{figure}[htb]
\centering
 \subfloat[$\sigma=9, \nu=3, \xi=1$]{\includegraphics[width=72mm]{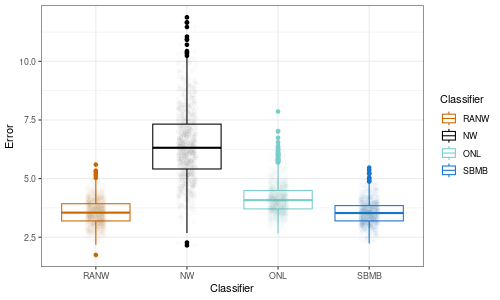}}
  \subfloat[$\sigma=9, \nu=3, \xi=9$]{\includegraphics[width=72mm]{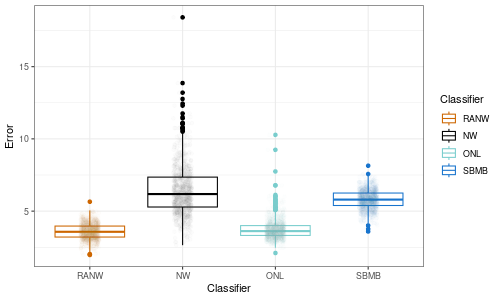}}
  \\
   \subfloat[$\sigma=16, \nu=3, \xi=1$]{\includegraphics[width=72mm]{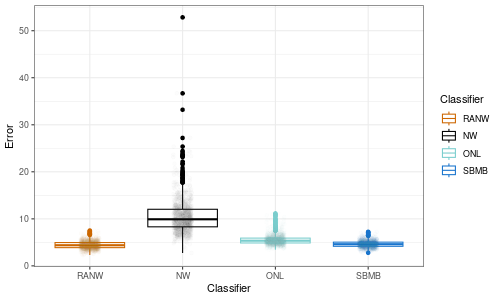}}
    \subfloat[$\sigma=16, \nu=3, \xi=9$]{\includegraphics[width=72mm]{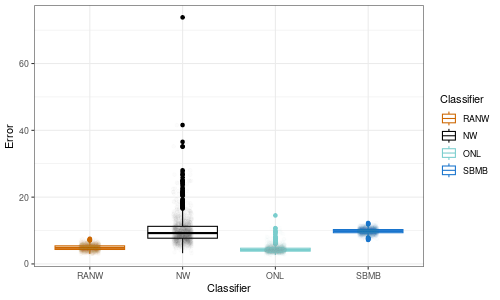}}
\caption{Box plot of classification errors (according to L2 distance) in $1000$ replicates.  The different scenarios are obtained in the skew-normal Student distribution with $\sigma \in \{ 9,16\}$ and $\xi \in \{ 1,9\}$, fixed $\nu=3$ and $\kappa=0$.}
\label{fig_cajas1}
\end{figure}

\begin{figure}[htb]
\centering
 \subfloat[$\sigma=9, \nu=3, \xi=1$]{\includegraphics[width=70mm]{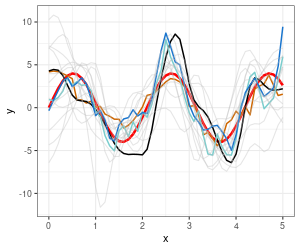}}
  \subfloat[$\sigma=9, \nu=3, \xi=9$]{\includegraphics[width=70mm]{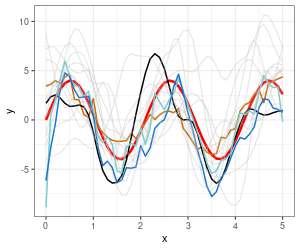}}
  \\
   \subfloat[$\sigma=16, \nu=3, \xi=1$]{\includegraphics[width=70mm]{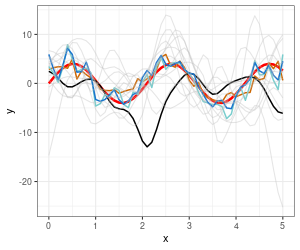}}
    \subfloat[$\sigma=16, \nu=3, \xi=9$]{\includegraphics[width=70mm]{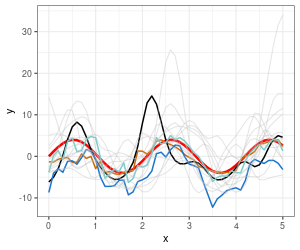}}
\caption{Regression functions estimated with the RANW (orange), NW (black), ONL (light blue) and SBMB (blue) in one replicate. The true function is shown in red.  The different scenarios are obtained in the skew-normal Student distribution with $\sigma \in \{ 9,16\}$ and $\xi \in \{ 1,9\}$, fixed $\mu=0$ and $\nu=3$. }
\label{fig_cajas2}
\end{figure}

\subsection{Robust set estimation}\label{sec:setesti}

Set estimation involves determining a set, or a function of that set, based on a random sample of points. This set could represent various things, such as the support of a probability distribution (see, for instance, \cite{rod07,chola:14}), its boundary (see, for instance \cite{cue:04}, the surface area of the boundary (see, for instance \cite{aaron22}), to name a few.

Within this framework, shape constraints are usually imposed, convexity being the most well-known. However, it can be restrictive for some applications, such as, for instance, if the set is the home-range of a species  (see \cite{cho16}, \cite{ch21}, \cite{manu} and references therein). Usually, the available data is an i.i.d. sample of a random vector whose support is the unknown set. For classic estimators such as the convex hull, $r$-convex hull, or cone-convex hull, any noise in the sample, no matter how small, drastically changes the estimators. This becomes even more pronounced in the case of convexity.

\subsubsection{Simulations}

We show the performance of our robust proposal under a noisy model, where the aim is to estimate a convex set.  More precisely, let $D(r,R)$  be the uniform distribution on the ring in $\mathbb{R}^2$ with inner radius $r$ and outer radius $R$. We simulated $2000$ i.i.d. observations of the mixture

$$ (1-\lambda)D(0,1) + \lambda D(1, 1.25).$$

The aim is to estimate $D(0,1)$ from this sample.  We have chosen $\lambda=0.01$ as the proportion of noise. The estimator, referred to as RChull, is the one proposed in  Section \ref{sec:compasp}, and we considered the Hausdorff distance between compact sets to measure the discrepancy between $D(0,1)$ and the estimator. To build our estimator, we first split the original sample at random into $K=20$ disjoint groups of points of size $100$, and compute the convex hull on each group. Then, we select from the $K$ hulls $H_1,\dots,H_{20}$, the hull $H_{j^*}$, with $j^*$ as in \eqref{eq:def_jstar}.

To gain an insight into the improvement resulting from using this  robust procedure, we show on the left of Figure \ref{fig_set} the set to be estimated, the classical estimator (the convex hull (Chull) of the whole sample) and the 20 hulls of the subsamples. The right panel shows the convex hull of the whole sample, together with $H_{j^*}$.

\begin{figure}[htb]
\centering
  \subfloat{\includegraphics[width=60mm]{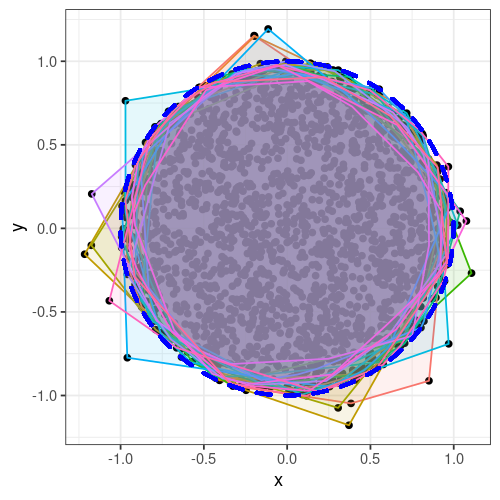}}
 \subfloat{\includegraphics[width=72mm]{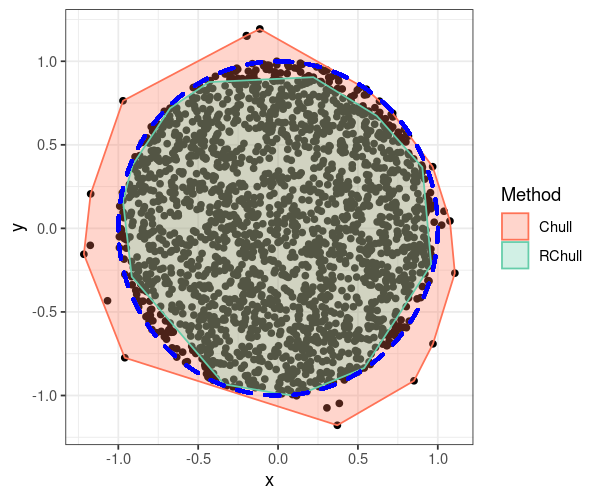}}
\caption{In the blue dotted line the boundary of the ball of radius 1. The sample is shown as solid black points. Outside this ball the sample generated from $D(1,1.25)$. On the left there are shown the 20 convex hulls of the selected subsamples (of size 100). On the right the convex hullf of the whole sample (Chull) and the robust estimator based on the $20$-convex hulls, (RChull).}
\label{fig_set}
\end{figure}

To evaluate the performance of GROS , we run 100 replicates and estimate the set by both methods (the classical convex hull  and our robust proposal) in each replicate.  We calculate the Hausdorff distance of the estimated sets RChull and Chull from the set D(0,1). In Figure \ref{box_set}  box plots of these distances are shown. The RChull estimator outperforms the classical Chull estimator, but as can be seen, it has  larger variability.

\begin{figure}[htb]
\centering
  \subfloat{\includegraphics[width=80mm]{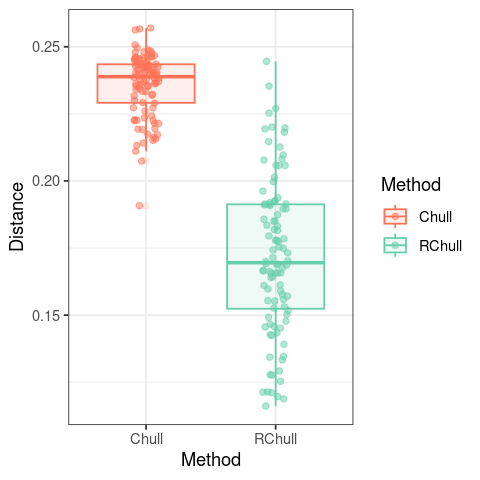}}
\caption{}
\label{box_set}
\end{figure}

\subsection{Robust persistent diagram}\label{sec:peresti}

The robustness of persistent homology to perturbations in data measured by the Hausdorff distance is well established. However, it has a high sensitivity to outliers, as discussed in \cite{vishwanath2020, vishwanath2022}.

In this section, we introduce a robust persistence diagram, which we call the \textit{Robust Wasserstein Estimator}, using \eqref{eq:def_jstar}.

The measure of dissimilarity between two persistence diagrams $P_1$ and $P_2$  is quantified by the 1-Wasserstein distance $W_1(P_1,P_2)$. This distance quantifies the cost associated with achieving the optimal alignment of points between the two diagrams, as detailed in \cite[p. 202]{edelsbrunner2022}.

\subsubsection{Simulations}
The example examines uniformly simulated data on $S^1$ consisting of 600 points (baseline sample). It explores two potential scenarios of sample distortion, as depicted in Figure \ref{xx}.

\begin{figure}[htb]
\centering
  \subfloat[Baseline sample]{\includegraphics[width=46mm]{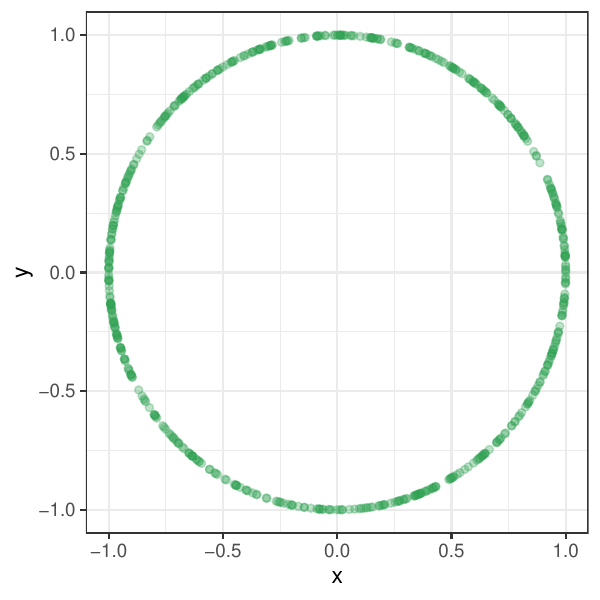}}
 \subfloat[Scenario 1: Local perturbation]{\includegraphics[width=47mm]{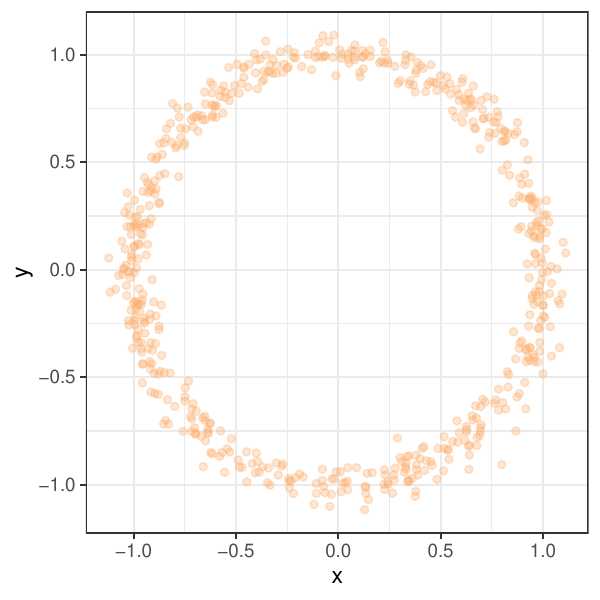}}
 \subfloat[Scenario 2: Groups of outliers]{\includegraphics[width=47mm]{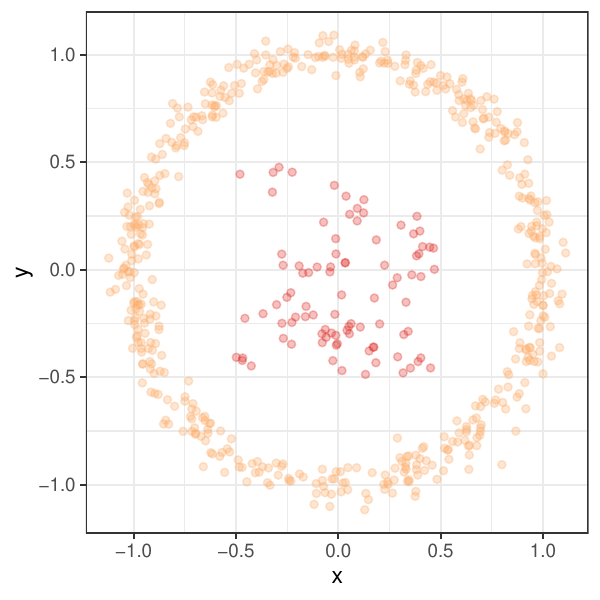}}
\caption{
(a) Baseline sample of 600 points uniformly distributed on $S^1$. (b) Locally perturbed sample as described in Scenario 1. (c) Sample perturbed in accordance with Scenario 2.}
\label{xx}
\end{figure}

\begin{itemize}
    \item \textbf{Scenario 1: } Local perturbation. The original sample is perturbed using Gaussian noise centered at each sample point, with a standard deviation matrix $0.05\times Id$.
    \item \textbf{Scenario 2:}  Group of Outliers: We randomly selected $90\%$ of the perturbed sample as defined in Scenario 1. The remaining $10\%$ are derived from a Matérn cluster process within the square region $[-0.5,0.5]^2$. This process is characterized by an intensity of $3$ for the Poisson process of cluster centers, a scale of $0.25$, and an average of $20$ points per cluster.

\end{itemize}

The persistence diagrams for the previously described tree samples were computed. Figure \ref{rr} displays these diagrams, where $Dgm$, $Dgm_1$, and $Dgm_2$ represent the persistence diagrams of the baseline sample, the locally perturbed sample (Scenario 1), and the sample with outliers (Scenario 2), respectively. It is evident that the persistence diagram is more distorted in scenarios with groups of outliers than it is  with those with local perturbations.

\begin{figure}[htb]
\centering
  \subfloat[$Dgm$]{\includegraphics[width=47mm]{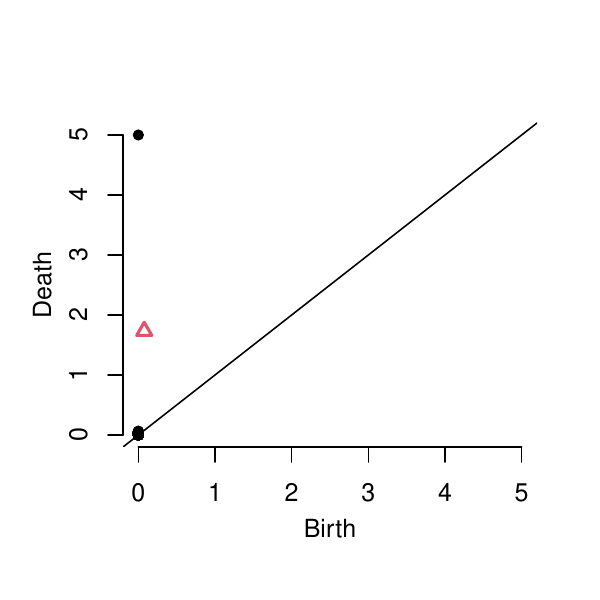}}
 \subfloat[$Dgm_1$]{\includegraphics[width=47mm]{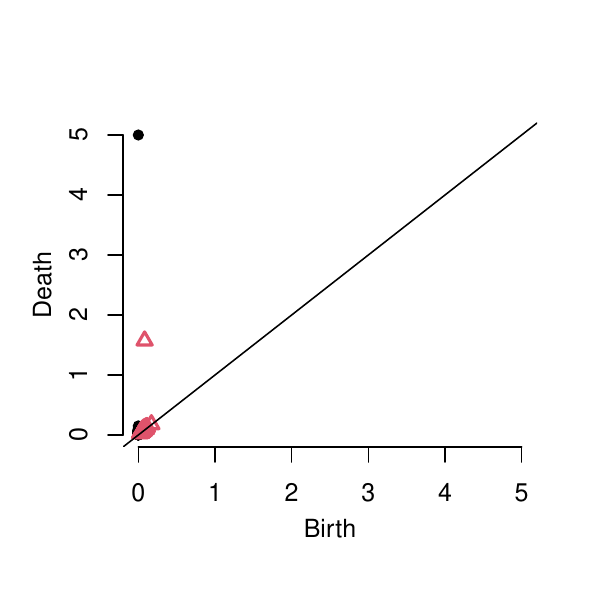}}
 \subfloat[$Dgm_2$]{\includegraphics[width=47mm]{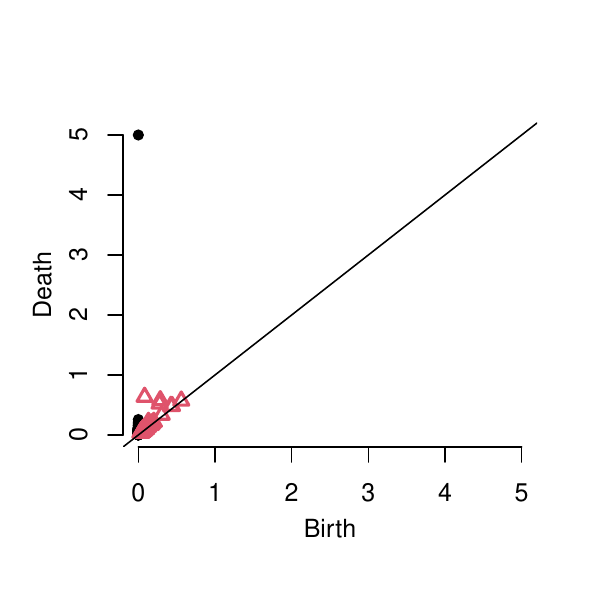}}
\caption{(a): Persistence diagram of the baseline sample. (b) and (c): Persistence diagrams for the samples perturbed according to Scenarios 1 and 2, respectively.}
\label{rr}
\end{figure}

To evaluate the performance of our robust proposal \eqref{eq:def_jstar}, the sample is divided into \( K=6 \) distinct groups. The robust persistence diagrams for Scenarios 1 and 2 are labeled as \( RDgm_{1} \) and \( RDgm_{2} \), respectively. We computed $W_1(Dgm_1, Dgm)$, $W_1(Dgm_2, Dgm)$, $W_1(RDgm_1, Dgm)$, and  $W_1(RDgm_2, Dgm)$.

A total of 100 independent iterations of this experiment were conducted, and box plots of the respective distances are shown in Figure \ref{box_diag}. The results indicate that in both scenarios, the robust estimates of the persistence diagrams show improved performance, being closer to the baseline diagram in terms of the Wasserstein distance.

\begin{figure}[htb]
\centering
  \subfloat{\includegraphics[width=130mm]{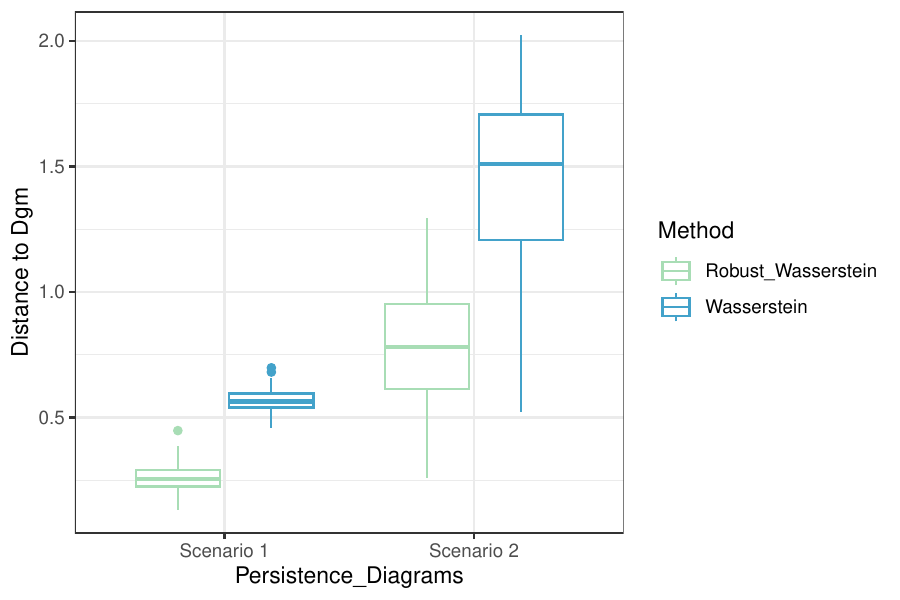}}
\caption{Box plots illustrating the distances between the persistence diagrams of perturbed samples and the baseline sample diagram \(Dgm\), as well as the distances between the robust persistence diagrams and the baseline sample diagram.}
\label{box_diag}
\end{figure}

\section{Concluding remarks}

We have demonstrated through simulations that GROS , which is applicable to a broad range of problems, significantly improves upon the non-robust, problem-specific solutions for each of the five examples treated. It is also competitive with robust solutions designed for each specific case, even showing some improvements. GROS  proposal's flexibility makes it applicable to a wide variety of problems, including those already presented, as well as any other scenario where robustness plays a crucial role. Furthermore, more examples using only pseudo-distances may be of interest for future research.

%
%

\end{document}